\newtheorem{theorem}{Theorem}
\newtheorem{prop}{Proposition}
\newtheorem{obs}{Observation}
\theoremstyle{remark}
\newtheorem{remark}{Remark}
\newcommand\numberthis{\addtocounter{equation}{1}\tag{\theequation}}
\DeclareMathOperator{\sgn}{sgn}
\DeclareMathOperator{\tr}{tr}
\DeclareMathOperator{\rk}{rk}
\DeclareMathOperator{\brk}{\underline{rk}}
\def\CC{\mathbb{C}}
\def\ZZ{\mathbb{Z}}
\def\ot{\otimes}
\title{Plethysm and fast matrix multiplication}
\author{Tim Seynnaeve}
\subjclass[2010]{20G05, 68Q17, 15A69}
\begin{document}

\begin{abstract}
	Motivated by the symmetric version of matrix multiplication we study the plethysm $S^k(\mathfrak{sl}_n)$ of the adjoint representation $\mathfrak{sl}_n$ of the Lie group $SL_n$. In particular, we describe the decomposition of this representation into irreducible components for $k=3$, and find highest-weight vectors for all irreducible components. Relations to fast matrix multiplication, in particular the Coppersmith-Winograd tensor, are presented.
\end{abstract}

\maketitle

\section{Introduction}
In 1969 \cite{strassen1969gaussian} Strassen presented his celebrated algorithm for matrix multiplication breaking for the first time the naive 
complexity bound of $n^3$ for $n\times n$ 
matrices. Since then, the complexity of the optimal matrix multiplication algorithm is one of the central problems in computer science. 
In terms of algebra we know that this question is equivalent to estimating rank or border rank of a specific tensor $M_{n,n,n}\in\CC^{n^2}\ot\CC^{n^2}\ot\CC^{n^2}$
\cite{JM1, landsberg_2017, BurgisserBook}. 
The current best lower and upper bounds are presented in \cite{JaJMSIAGA, JaJMIMRN, JMOttaviani, Virgi, LeGall}. 

We recall
that the constant $\omega$ is defined as the smallest number such that for any $\epsilon >0$ the multiplication of $n\times n$ matrices can be performed in time 
$O(n^{\omega+\epsilon})$.
Further, recall that the Waring rank of a homogeneous polynomial $P$ of degree $d$ is the smallest number $r$ of linear forms $l_1,\dots,l_r$ such that
$P=\sum_{i=1}^r l_i^d$. 
Recently, Chiantini et al. \cite{chiantini2017polynomials} provided another equivalent interpretation of $\omega$ in terms of Waring (border) rank. Namely, let $SM_n$ be a cubic in 
$S^3(\mathfrak{sl}_n^*)$ given by 
$SM_n(A)=\tr(A^3)$. Then $\omega$ is the smallest number such that for any $\epsilon>0$ the Waring rank (or Waring border rank) of 
$SM_n$ is $O(n^{\omega+\epsilon})$.
This observation was the initial motivation for our study of the plethysm $S^3(\mathfrak{sl}_n)$. 

The computations of plethysm are in general very hard and
explicit formulas are known only in specific cases \cite{macdonald1998symmetric}. For example for symmetric power $S^3(S^k)$ the decomposition was classically computed already in 
\cite{Thrall,Plunkett}, but $S^4(S^k)$ and $S^5(S^k)$ were only recently explicitely obtained in \cite{JaThomas}. As symmetric powers (together with exterior powers) are the simplest Schur functors, 
one could expect that respective 
formulas for $S^d(\mathfrak{sl}_n)$ are harder. In principle, one could use the methods of \cite{Howe, JaThomas, JaManivel} to decompose this plethysm, but this requires
a lot of nontrivial character manipulations. Instead, we present a very easy proof of explicit decomposition based on Cauchy formula and Littlewood-Richardson rule in 
Theorem \ref{thm:plet}. In fact, using our method one can inductively obtain the formula for $S^k(\mathfrak{sl}_n)$ for any $k$.

While matrix multiplication is represented by the (unique) invariant in $S^3(\mathfrak{sl}_n)$ the aim of this article is to understand the other highest-weight
vectors. A precise description of them is presented in Section \ref{sec:hwv}. We plan to undertake a detailed study of ranks and border ranks of other highest-weight vectors in
future work. Here we present just the first two nontrivial instances. It turns out, that two of the highest-weight vectors are (isomorphic to) the (four and five dimensional) variants of the
Coppersmith-Winograd tensor \cite{CW}. We recall that the best upper bounds for rank and border rank are based on a beautiful technique by Coppersmith and Winograd applied to 
a specific tensor $T$ \cite{Virgi}. While $T$ is extremely efficient for this technique, it is completely not clear which properties of $T$ make it so useful and
how to identify potentially better tensors. In fact, there are whole programs, see e.g.~\cite{cohn2003group}, aimed at finding tensors similar to, but better than Coppersmith-Winograd. We hope that 
other highest-weight vectors will also reveal their importance.    

\subsection*{Acknowledgement} The author would like to thank his advisor, Mateusz Micha\l{}ek, for the many helpful comments and discussions.

\section{The plethysm}
In this section we describe a general procedure to decompose $S^k(\mathfrak{gl}_n)$ and $S^k(\mathfrak{sl}_n)$ into irreducibles.
Recall that the irreducible representations of $SL_n$ are precisely the representations $\mathbb{S}_{\lambda}(\mathbb{C}^n)$, where $\lambda=[\lambda_1,\ldots,\lambda_{n-1}]$ is a partition of length at most $n-1$, and $\mathbb{S}_{\lambda}$ is the Schur functor associated to the partition $\lambda$ (consult for example \cite{FH13}).
\begin{theorem}\label{thm:plet}
For $n \in \mathbb{N}$, it holds that
\[
S^k(\mathfrak{gl}_n) \cong \bigoplus_{\lambda \vdash k}\bigoplus_{\nu}N_{\lambda \overline{\lambda}}^{\nu}\mathbb{S}_{\nu}(\mathbb{C}^n) \numberthis \label{plethysm}
\]
as $SL_n$-representations.
Here the second summation is over all partitions $\nu$ of length at most $n-1$, $N_{\lambda \mu}^{\nu}$ are the Littlewood-Richardson coefficients, and $\overline{\lambda}=[\lambda_1,\lambda_1-\lambda_{n-1},\ldots,\lambda_1-\lambda_2]$.
\end{theorem}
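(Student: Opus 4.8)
The plan is to derive the decomposition from two classical ingredients, the symmetric Cauchy formula and the Littlewood--Richardson rule, with a duality trick in between to pass from $GL_n$ to $SL_n$.

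First I would observe that $\mathfrak{gl}_n \cong \CC^n \ot (\CC^n)^*$ as $GL_n$-representations, since the conjugation action on $\mathfrak{gl}_n = \mathrm{End}(\CC^n)$ is exactly the natural action on $\CC^n \ot (\CC^n)^*$; in particular this is an isomorphism of $SL_n$-representations. Applying the symmetric Cauchy formula $S^k(V \ot W) \cong \bigoplus_{\lambda \vdash k} \mathbb{S}_\lambda(V) \ot \mathbb{S}_\lambda(W)$ with $V = \CC^n$ and $W = (\CC^n)^*$ then gives $S^k(\mathfrak{gl}_n) \cong \bigoplus_{\lambda \vdash k} \mathbb{S}_\lambda(\CC^n) \ot \mathbb{S}_\lambda((\CC^n)^*)$, where the summands with $\ell(\lambda) > n$ are zero and can be discarded.

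The key step is to rewrite each factor $\mathbb{S}_\lambda((\CC^n)^*) \cong \mathbb{S}_\lambda(\CC^n)^*$ (Schur functors commute with duality) as an irreducible $SL_n$-representation. The dual $\mathbb{S}_\lambda(\CC^n)^*$ has highest weight $-w_0\lambda = (-\lambda_n, \dots, -\lambda_1)$, and since weights of $SL_n$ are only defined modulo $(1^n)$, adding the appropriate multiple of $(1^n)$ identifies it with $\mathbb{S}_{\overline\lambda}(\CC^n)$ for $\overline\lambda$ as in the statement (when $\ell(\lambda)=n$ one should first reduce $\lambda$ modulo $(1^n)$, and this is precisely where the indexing must be pinned down carefully). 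Finally the Littlewood--Richardson rule decomposes $\mathbb{S}_\lambda(\CC^n)\ot\mathbb{S}_{\overline\lambda}(\CC^n) \cong \bigoplus_\nu N_{\lambda\overline\lambda}^{\nu}\,\mathbb{S}_\nu(\CC^n)$; one keeps only $\nu$ with $\ell(\nu)\le n$ and reduces those with $\ell(\nu)=n$ modulo $(1^n)$ — which causes no collisions, since all $\nu$ occurring for a fixed $\lambda$ have the same size $|\lambda|+|\overline\lambda|$ — and summing over $\lambda\vdash k$ gives the asserted formula.

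I expect the only genuine difficulty to be the bookkeeping in moving between $GL_n$- and $SL_n$-representations: the Cauchy formula and the Littlewood--Richardson rule live most naturally over $GL_n$ (or $GL_n\times GL_n$), whereas the target is indexed by $SL_n$-dominant weights of length at most $n-1$, so one must carry the duality $\mathbb{S}_\lambda(\CC^n)^* \cong \mathbb{S}_{\overline\lambda}(\CC^n)$ and the reductions of length-$n$ partitions through consistently. Once those conventions are fixed the rest is a routine invocation of standard representation theory; moreover the same argument yields $S^k(\mathfrak{sl}_n)$, via $\mathfrak{gl}_n \cong \mathfrak{sl}_n \oplus \CC$, and more generally can be iterated to handle arbitrary Schur functors of $\mathfrak{sl}_n$.
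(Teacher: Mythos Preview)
Your proposal is correct and follows exactly the same route as the paper: identify $\mathfrak{gl}_n \cong \CC^n \ot (\CC^n)^*$, apply Cauchy's formula, use the duality $\mathbb{S}_\lambda(\CC^n)^* \cong \mathbb{S}_{\overline\lambda}(\CC^n)$ as $SL_n$-representations, and finish with the Littlewood--Richardson rule. If anything, you are more careful than the paper about the $GL_n$/$SL_n$ bookkeeping (length-$n$ partitions and reductions modulo $(1^n)$), which the paper's proof leaves implicit.
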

\begin{proof}
Note that $\mathfrak{gl}_n \cong (\mathbb{C}^n) \otimes (\mathbb{C}^n)^*$ as $SL_n$-representations. So
\begin{align*}
S^k(\mathfrak{gl}_n) \cong& S^k\big((\mathbb{C}^n) \otimes (\mathbb{C}^n)^*\big)
\cong \bigoplus_{\lambda \vdash k}\mathbb{S}_{\lambda}(\mathbb{C}^n) \otimes \mathbb{S}_{\lambda}(\mathbb{C}^n)^*\\
\cong& \bigoplus_{\lambda \vdash k}\mathbb{S}_{\lambda}(\mathbb{C}^n) \otimes \mathbb{S}_{\overline{\lambda}}(\mathbb{C}^n)
\cong \bigoplus_{\lambda \vdash k}\bigoplus_{\nu}N_{\lambda \overline{\lambda}}^{\nu}\mathbb{S}_{\nu}(\mathbb{C}^n) \text{.}
\end{align*}
The second isomorphism holds by Cauchy's formula; for the third one see for example \cite[15.50]{FH13}; the fourth isomorphism is the Littlewood-Richardson rule.
\end{proof}
To compute the decomposition of $S^k(\mathfrak{sl}_n)$, we simply note that 
\begin{align*}
S^k(\mathfrak{gl}_n) \cong& S^k(\mathfrak{sl}_n\oplus \mathbb{C}) \cong \mathbb{C} \oplus \bigoplus_{i=1}^k{S^i(\mathfrak{sl}_n)} \text{.}
\end{align*}
This allows us to compute the decomposition of $S^k(\mathfrak{sl}_n)$ inductively.\\
As a corollary we present an explicit decomposition in the case $k=3$. Computing the Littlewood-Richardson coefficients in \eqref{plethysm} gives us the decomposition of $S^3(\mathfrak{gl}_n)$ (resp.\  $S^3(\mathfrak{sl}_n)$) into irreducibles. We present these in Table \ref{tablePlethysm}: the first column lists the highest weights $\lambda$ of the occurring irreducible representations $\mathbb{S}_{\lambda}(\mathbb{C}^n)$. To be more precise: the first column actually shows the highest weights when we view $S^3(\mathfrak{gl}_n)$ (resp.\  $S^3(\mathfrak{sl}_n)$) as a $GL_n$-representation. (Recall that weights of $GL_n$ are $n$-tuples $[\lambda_1,\ldots,\lambda_n]\in \ZZ^n$ with $\lambda_1 \geq \ldots \geq \lambda_n$. The corresponding $SL_n$-weight is then $[\lambda_1-\lambda_n,\ldots,\lambda_{n-1}-\lambda_n]$.) 
The second and third column list the multiplicities of the irreducibles in $S^3(\mathfrak{gl}_n)$ resp.\  $S^3(\mathfrak{sl}_n)$. We also list the dimensions of the occurring irreducible representations $\mathbb{S}_{\lambda}(\mathbb{C}^n)$, as well as the dimensions of the projective homogeneous varieties contained in $\mathbb{P}(\mathbb{S}_{\lambda}(\mathbb{C}^n))$ (see Subsection \ref{subsec:homog}).
\begin{table}[h]
	\centering
	\caption{Irreducible components of $S^3(\mathfrak{gl}_n)$ and $S^3(\mathfrak{sl}_n)$}
	\label{tablePlethysm}
	\begin{tabular}{|l|l|l|l|l|}
		\hline
		Highest weight & $S^3(\mathfrak{gl}_n)$ & $S^3(\mathfrak{sl}_n)$ & Dimension & Variety \\ \hline
		$[0,\ldots,0]$ & $3$ & $1$ & $1$ & $0$ \\ \hline
		$[1,0,\ldots,0,-1]$ & $4$ & $2$ & $n^2-1$ & $2n-3$ \\ \hline
		$[2,0,\ldots,0,-2]$ & $2$ & $1$ & $\frac{(n-1)n^2(n+3)}{4}$ & $2n-3$ \\ \hline
		$[3,0,\ldots,0,-3]$ & $1$ & $1$ & $\frac{(n-1)n^2(n+1)^2(n+5)}{36}$ & $2n-3$ \\ \hline
		$[1,1,0,\ldots,0,-1,-1]$ & $2$ & $1$ & $\frac{(n-3)n^2(n+1)}{4}$ & $4n-12$ \\ \hline
		$[2,0,\ldots,0,-1,-1]$ & $1$ & $1$ & $\frac{(n-2)(n-1)(n+1)(n+2)}{4}$ & $3n-7$ \\ \hline
		$[1,1,0,\ldots,0,-2]$ & $1$ & $1$ & $\frac{(n-2)(n-1)(n+1)(n+2)}{4}$ & $3n-7$ \\ \hline
		$[2,1,0,\ldots,0,-1,-2]$ & $1$ & $1$ & $\frac{(n-3)(n-1)^2(n+1)^2(n+3)}{9}$ & $4n-10$ \\ \hline
		$[1,1,1,0,\ldots,0,-1,-1,-1]$ & $1$ & $1$ & $\frac{(n-5)(n-1)^2n^2(n+1)}{36}$ & $6n-27$ \\ \hline
	\end{tabular}
\end{table}
\subsection{Homogeneous varieties}\label{subsec:homog}
Let $V$ be an irreducible representation of a semisimple Lie group G. Then $\mathbb{P}V$ has a unique closed $G$-orbit $X$, which is the orbit of the highest-weight vector in $\mathbb{P}V$ under the action of $G$. The projective variety $X$ is isomorphic to $G/P$, where $P$ is a parabolic subgroup. We call these varieties homogeneous varieties or partial flag varieties. \\
In our case $G=SL_n$, we can compute the dimension of $X$ in the following way:
Consider the Dynkin diagram of $\mathfrak{sl}_n$, which consists of $n-1$ dots marked $1$ to $n-1$, and the Young diagram $\lambda$ associated to the representation $V$. For every $j \in \{1,\ldots,n-1 \}$, if the Young diagram has at least one column of length $j$, we remove the dot $j$ from the Dynkin diagram. After removing these dots the Dynkin diagram splits in connected components of size $k_i$. The dimension of our variety $X$ is then given by 
\[
\frac{1}{2}\left(n^2-n-\sum_{i}{(k_i^2+k_i)}\right) \text{.}
\]
This gives us the last column of Table \ref{tablePlethysm}.
\section{Highest weight vectors}\label{sec:hwv}
We now describe highest-weight vectors for all irreducible components of $S^3(\mathfrak{gl}_n)$. We write $E_{i,j} \in \mathfrak{gl}_n$ for the $n \times n$ matrix with as only nonzero entry a $1$ on position $(i,j)$. Note that the vector $E_{i,j}E_{i',j'}E_{i'',j''} \in S^3(\mathfrak{gl}_n)$ has weight $e_i+e_{i'}+e_{i''}-e_j-e_{j'}-e_{j''}$, where $e_i$ is the weight $[0,\ldots,1,\ldots,0]$ with a $1$ on the $i$-th position. Furthermore, to check that a weight vector $v$ in some representation $V$ of $SL_n$ is a highest-weight vector, it suffices to view $V$ as a representation of the Lie algebra $\mathfrak{sl}_n$ and check that every matrix $E_{i,i+1}$ acts by zero. Using this, it is straightforward to check that the vectors listed in Table \ref{tableHW} are indeed highest-weight vectors.
\begin{table}[h]
	\centering
	\caption{Highest weight vectors of $S^3(\mathfrak{gl}_n)$}
	\label{tableHW}
	\begin{tabular}{|l|p{60mm}|}
	\hline
	Weight & Highest Weight Vector \\ \hline
	$[0,\ldots,0]$ & $III$ \\ \hline
	$[0,\ldots,0]$ & $\sum_{i,j}{IE_{i,j}E_{j,i}}$ \\ \hline
	$[0,\ldots,0]$ & $\sum_{i,j,k}{E_{i,j}E_{j,k}E_{k,i}}$ \\ \hline
	$[1,0,\ldots,0,-1]$ & $IIE_{1,n}$ \\ \hline
	$[1,0,\ldots,0,-1]$ & $\sum_i{IE_{1,i}E_{i,n}}$ \\ \hline
	$[1,0,\ldots,0,-1]$ & $\sum_{i,j}{E_{1,n}E_{i,j}E_{j,i}}$ \\ \hline
	$[1,0,\ldots,0,-1]$ & $\sum_{i,j}{E_{1,i}E_{i,j}E_{j,n}}$ \\ \hline
	$[2,0,\ldots,0,-2]$ & $IE_{1,n}E_{1,n}$ \\ \hline
	$[2,0,\ldots,0,-2]$& $\sum_i{E_{1,n}E_{1,i}E_{i,n}}$ \\ \hline
	$[1,1,0,\ldots,0,-2]$ & $\sum_i{E_{1,n}E_{2,i}E_{i,n}-E_{2,n}E_{1,i}E_{i,n}}$ \\ \hline
	$[2,0,\ldots,0,-1,-1]$ & $\sum_i{E_{1,n}E_{1,i}E_{i,n-1}-E_{1,n-1}E_{1,i}E_{i,n}}$ \\ \hline
	$[1,1,0,\ldots,0,-1,-1]$ & $IE_{1,n}E_{2,n-1}-IE_{1,n-1}E_{2,n}$ \\ \hline
	$[1,1,0,\ldots,0,-1,-1]$ & $\sum_i{E_{1,n}E_{2,i}E_{i,n-1} - E_{2,n}E_{1,i}E_{i,n-1}}$ ${ - E_{1,n-1}E_{2,i}E_{i,n} + E_{2,n-1}E_{1,i}E_{i,n}}$ \\ \hline
	$[3,0,\ldots,0,-3]$ & $E_{1,n}E_{1,n}E_{1,n}$ \\ \hline
	$[2,1,0,\ldots,0,-1,-2]$ & $E_{1,n}E_{1,n-1}E_{2,n} - E_{1,n}E_{1,n}E_{2,n-1}$ \\ \hline
	$[1,1,1,0,\ldots,0,-1,-1,-1]$ & $\sum_{\sigma \in S_3}{\sgn{\sigma}E_{\sigma(1),n}E_{\sigma(2),n-1}E_{\sigma(3),n-2}}$ \\ \hline	
	\end{tabular}
\end{table}
\subsection{Waring rank and border Waring rank}
As explained in the introduction (see also \cite{chiantini2017polynomials}), estimating the (border) Waring rank of the highest-weight vector $\sum_{i,j,k}{E_{i,j}E_{j,k}E_{k,i}}$ is equivalent to determining the exponent $\omega$ of matrix multiplication. We will analyze the (border) Waring ranks of other highest-weight vectors. We start with the following surprising observation:
\begin{obs}
	Every highest-weight vector with weight different from $[0,\ldots,0]$ has Waring rank $O(n^2)$. Furthermore the weight space of $[0,\ldots,0]$ is 3-dimensional: it has a basis consisting of two vectors of Waring rank $O(n^2)$, and the vector $\sum_{i,j,k}{E_{i,j}E_{j,k}E_{k,i}}$.
\end{obs}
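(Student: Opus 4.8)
The plan is to go through Table~\ref{tableHW} and for each highest-weight vector exhibit a Waring decomposition with $O(n^2)$ summands, treating the three weight-$[0,\ldots,0]$ vectors separately. The key observation driving all of this is that the vectors in the table are (up to the exceptional one) built as sums of monomials $E_{a,b}E_{c,d}E_{e,f}$ in which, crucially, \emph{at most two of the six index pairs are summed over}, so after grouping, each vector is a sum of $O(n)$ pieces, each of which is itself a cubic in $O(n)$ variables of a very restricted shape. For instance $\sum_i IE_{1,i}E_{i,n}$ has a fixed linear factor $I$ times a quadratic form $\sum_i E_{1,i}E_{i,n}$ in $2(n-1)$ variables; a product $\ell q$ of a linear form and a quadratic form in $m$ variables has Waring rank $O(m)$ by a standard argument (e.g.\ write $q$ as a sum of $O(m)$ squares of linear forms $\ell_j^2$ and use the identity expressing $\ell \ell_j^2$ as a combination of $(\ell+t\ell_j)^3$ for three values of $t$), so this vector has rank $O(n)$. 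The genuinely two-summation vectors such as $\sum_{i,j}E_{1,i}E_{i,j}E_{j,n}$ or $\sum_{i,j}E_{1,n}E_{i,j}E_{j,i}$ are handled by first fixing one of the two summation indices: $\sum_{i,j}E_{1,i}E_{i,j}E_{j,n}=\sum_j\bigl(\sum_i E_{1,i}E_{i,j}\bigr)E_{j,n}$ is a sum of $n$ terms, each a linear form times a quadratic form in $O(n)$ variables, hence of rank $O(n)$ each and $O(n^2)$ in total; and $\sum_{i,j}E_{1,n}E_{i,j}E_{j,i}=E_{1,n}\cdot\bigl(\sum_{i,j}E_{i,j}E_{j,i}\bigr)$ is a single linear form times a quadratic form in $O(n^2)$ variables, hence of rank $O(n^2)$.

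The remaining entries are finite linear combinations (with a bounded number of terms) of vectors of the two shapes just discussed — a fixed monomial (rank $1$), a fixed linear form times a quadratic form in $O(n)$ variables (rank $O(n)$), or one of the doubly-summed shapes (rank $O(n^2)$) — so by subadditivity of Waring rank each has rank $O(n^2)$. This disposes of every row of Table~\ref{tableHW} except $\sum_{i,j,k}E_{i,j}E_{j,k}E_{k,i}$, and in particular of the two non-exceptional weight-$[0,\ldots,0]$ vectors $III$ (rank $1$) and $\sum_{i,j}IE_{i,j}E_{j,i}=I\cdot\bigl(\sum_{i,j}E_{i,j}E_{j,i}\bigr)$ (rank $O(n^2)$). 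Since by Table~\ref{tablePlethysm} the weight space of $[0,\ldots,0]$ in $S^3(\mathfrak{gl}_n)$ is exactly $3$-dimensional, and $III$, $\sum_{i,j}IE_{i,j}E_{j,i}$, $\sum_{i,j,k}E_{i,j}E_{j,k}E_{k,i}$ are three linearly independent vectors in it (they are the three highest-weight vectors listed, coming from the three distinct partitions of $3$), they form a basis, which gives the last sentence of the statement.

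The main obstacle is not conceptual but bookkeeping: one must verify that the ``only $O(n)$ distinct monomial-families, each a linear-times-quadratic form in $O(n)$ variables'' pattern really does apply to \emph{every} row of the table, and in particular check the vectors with two summation indices ($\sum_{i,j}E_{1,i}E_{i,j}E_{j,n}$, $\sum_{i,j}E_{1,n}E_{i,j}E_{j,i}$, and the antisymmetrized version with weight $[1,1,0,\ldots,0,-1,-1]$) to make sure the grouping by one index always leaves a linear-times-quadratic (or quadratic-times-linear) form, never a genuine cubic in $\Omega(n)$ variables. A secondary point worth stating cleanly is the auxiliary lemma used throughout: a product $\ell\cdot q$ of a linear form and a quadratic form in $m$ variables over $\CC$ has Waring rank at most $Cm$ for an absolute constant $C$; I would prove this once, using that $q=\sum_{j=1}^{m}\ell_j^2$ and $\ell\ell_j^2=\tfrac{1}{6}\bigl((\ell+\ell_j)^3+(\ell-\ell_j)^3-2\ell^3\bigr)$, so $\ell q$ is a sum of $3m+1$ cubes.
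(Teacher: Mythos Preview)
Your argument is correct, but it is considerably more elaborate than what the paper actually does. The paper's proof is a single line: every highest-weight vector in Table~\ref{tableHW} other than $\sum_{i,j,k}E_{i,j}E_{j,k}E_{k,i}$ is visibly a sum of at most $n^2$ monomials (since at most two indices are summed over), and any degree-$3$ monomial has Waring rank at most $4$; subadditivity gives rank $\le 4n^2$. No grouping into linear-times-quadratic pieces, no auxiliary lemma on $\ell\cdot q$, and no case analysis is needed. Your structural decomposition is not wrong---and it does yield sharper $O(n)$ bounds for several of the single-sum vectors---but for the bare $O(n^2)$ claim it is overhead. Two small remarks on your write-up: the second vector of weight $[1,1,0,\ldots,0,-1,-1]$ has only a \emph{single} summation index, so it does not belong in your list of doubly-summed cases; and in your lemma the $m$ copies of $\ell^3$ combine, so $\ell q$ is a sum of $2m+1$ cubes rather than $3m+1$.
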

\begin{proof}
 Every of the highest-weight vectors in Table \ref{tableHW}, except for $\sum_{i,j,k}{E_{i,j}E_{j,k}E_{k,i}}$, is a sum of at most $n^2$ monomials, and every degree 3 monomial has Waring rank at most 4. 
\end{proof}
We now study the highest-weight vectors $IE_{1,n}E_{2,n-1}-IE_{1,n-1}E_{2,n}$ and\\ $E_{1,n}E_{1,n-1}E_{2,n} - E_{1,n}E_{1,n}E_{2,n-1}$, which we will rewrite as $xyz-xwt$ and $xzt-x^2y$.
\begin{prop}
	The cubics $f_1=xyz-xwt$ and $f_2=xzt-x^2y$ are two variants of the Coppersmith-Winograd tensor. Their ranks and border ranks (equal to Waring rank resp.\ Waring border rank) are given by 
	$\rk(f_1)=9,\brk(f_1)=6,\rk(f_2)=7,\brk(f_2)=4$.
\end{prop}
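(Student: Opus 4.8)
The plan is to bring $f_1$ and $f_2$ into a standard shape, read the upper bounds off explicit (border) Waring decompositions, and get the lower bounds from catalecticants, apolarity and the substitution method. For the normalization, observe that each of the two cubics is a linear form times a quadratic form. In $f_1=x(yz-wt)$ the factor $yz-wt$ is a nondegenerate quadratic form in the four variables $y,z,w,t$, while $x$ is an independent fifth variable; since over $\CC$ all nondegenerate quadratic forms in a fixed number of variables lie in one $GL$-orbit, a linear change of coordinates carries $f_1$ to $x_0(x_1^2+x_2^2+x_3^2+x_4^2)$, which is the (five-dimensional) variant of the Coppersmith--Winograd tensor coming from the small CW tensor $T_{cw,4}$. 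In $f_2=x(zt-xy)$ the quadratic factor is again nondegenerate but now involves $x$, and the pair $\bigl(\{x=0\},\{zt-xy=0\}\bigr)$ --- a hyperplane tangent to a smooth quadric --- matches $\bigl(\{x_0=0\},\{x_1^2+x_2^2+x_0x_3=0\}\bigr)$, so $f_2$ is $GL$-equivalent to $x_0(x_1^2+x_2^2+x_0x_3)$, the (four-dimensional) variant coming from the big CW tensor $T_{CW,2}$. This proves the first sentence of the proposition and, since (border) Waring rank is $GL$-invariant, reduces the rest to these two normal forms.

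For the upper bounds: using $x_0x_i^2=\tfrac16\bigl((x_0+x_i)^3+(x_0-x_i)^3\bigr)-\tfrac13x_0^3$ and recycling the single cube $x_0^3$ over $i=1,\dots,4$ writes $f_1$ as a sum of $2\cdot4+1=9$ cubes; for $f_2$ one also shifts $x_3\mapsto x_3+cx_0$ to swallow the residual $x_0^3$ into $x_0^2x_3$ and expands $x_0^2\ell=\tfrac16\bigl((x_0+\ell)^3-(x_0-\ell)^3\bigr)-\tfrac13\ell^3$, for $2\cdot2+3=7$ cubes, so $\rk(f_1)\le9$ and $\rk(f_2)\le7$. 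For the border ranks one runs the Coppersmith--Winograd degeneration symmetrically: the cubes $(x_0+\ep x_i)^3$, $i=1,\dots,q$ (with $q=4$ for $f_1$, and $q=2$ together with an extra term $\ep^2x_3$ inside each cube for $f_2$), collapse onto $x_0^3$ and reproduce $f_1$ (resp.\ $f_2$) in the coefficient of $\ep^2$, while the $\ep^0$- and $\ep^1$-remainders combine into a single term $x_0^2\cdot(\text{linear})$ of Waring border rank $2$, which is cancelled by two additional cubes; letting $\ep\to0$ gives $\brk(f_1)\le6$ and $\brk(f_2)\le4$.

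For the lower bounds: the first catalecticant $\mathrm{Cat}_{1,2}(f)$ --- the span of the first partial derivatives regarded as quadrics --- has rank $5$ for $f_1$ and $4$ for $f_2$, which already gives $\brk(f_2)\ge4$, matching the upper bound. For $\brk(f_1)$ the catalecticant falls one short, and I would argue by apolarity: the quadrics in $f_1^\perp$ have empty common zero locus in $\mathbb{P}^4$ (exactly as for the monomial $xyz$), which forces every $0$-dimensional scheme $\Gamma$ with $I(\Gamma)\subseteq f_1^\perp$ to satisfy $\operatorname{length}(\Gamma)\ge h_\Gamma(2)\ge6$; since border rank dominates cactus rank, $\brk(f_1)\ge6$. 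For the Waring ranks the catalecticant bounds ($5$ and $4$) are far too weak, so I would use the substitution method: quotienting by a well-chosen linear form reduces $f_i$ to the next smaller Coppersmith--Winograd tensor, and adding up the drops in rank gives $\rk(f_1)\ge9$ and $\rk(f_2)\ge7$ by induction --- equivalently, invoke the known tensor ranks of the Coppersmith--Winograd tensors together with $\rk_{\mathrm{Waring}}\ge\rk_{\mathrm{tensor}}$ and the upper bounds above.

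The genuinely hard steps are the two rank lower bounds --- the gap to the catalecticant is wide, and the substitution method has to be steered so that each residual tensor is precisely the next Coppersmith--Winograd tensor and the rank increments are provably forced --- and the bound $\brk(f_1)\ge6$, which no flattening can see (this is exactly the place where the Waring border rank strictly exceeds the tensor border rank $5$) and which must therefore rest on the apolarity/cactus-rank argument above, or on a direct study of $\sigma_5\bigl(v_3(\mathbb{P}^4)\bigr)$ --- delicate, because this Veronese variety is defective.
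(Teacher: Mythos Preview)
Your identification of $f_1$ and $f_2$ with Coppersmith--Winograd tensors is correct and is exactly what the paper does (the paper gives the explicit substitution $y=x_1+ix_2$, $z=x_1-ix_2$, $w=x_3+ix_4$, $t=-x_3+ix_4$ for $f_1$, and appeals to the geometric characterisation in \cite{LaMM} for $f_2$). After that point the paper simply \emph{cites} the literature for the numerical values --- \cite{CW} for the border ranks and \cite[Proposition~7.1]{LaMM} for the ranks --- whereas you sketch self-contained proofs. Your explicit Waring decompositions giving $\rk(f_1)\le9$ and $\rk(f_2)\le7$ are correct, as are the border-rank degenerations; the catalecticant bound $\brk(f_2)\ge4$ is immediate; and your apolarity argument for $\brk(f_1)\ge6$ is valid and rather nice: the base locus of $(f_1^\perp)_2$ is empty, so any nonempty apolar scheme $\Gamma$ has $I(\Gamma)_2\subsetneq(f_1^\perp)_2$, forcing $h_\Gamma(2)\ge6$, and since the cactus variety is closed and contains the secant variety one does have $\brk\ge$ cactus rank. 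For the Waring-rank lower bounds you ultimately invoke the known tensor ranks of the CW tensors, which is the same citation the paper makes, so on that point the two routes coincide.

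Two factual slips in your closing commentary, neither of which affects the argument proper. First, the tensor border rank of $f_1$ is $6$, not $5$: the flattening bound is $5$, but the actual tensor border rank equals the Waring border rank here --- indeed the proposition itself asserts this equality in its parenthesis. Second, $\sigma_5\bigl(v_3(\mathbb{P}^4)\bigr)$ is \emph{not} defective; the Alexander--Hirschowitz exception for cubics in $\mathbb{P}^4$ is $\sigma_7$.
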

\begin{proof}
	After the change of basis $x=x_0$, $y=x_1+ix_2$, $z=x_1-ix_2$, $w=x_3+ix_4$, $t=-x_3+ix_4$, our cubic $f_1$ becomes $x_0x_1^2+x_0x_2^2+x_0x_3^2+x_0x_4^2$, which is precisely the Coppersmith-Winograd tensor $T_{4,CW}$ (here we use the notation from \cite[Section 7]{LaMM}).
	For $f_2$ we can do a similar change of basis, or alternatively we can use the geometric characterization of Coppersmith-Winograd tensors form \cite[Theorem 7.4]{LaMM}. We find that $f_2$ is isomorphic to $\tilde{T}_{2,CW}$.\\
	The ranks and border ranks of Coppersmith-Winograd tensors are known: consult for example \cite{CW} for the border ranks and \cite[Proposition 7.1]{LaMM} for the ranks.
\end{proof}
\begin{remark}
	The highest-weight vectors that are monomials are easily understood: $III$ and $E_{1,n}E_{1,n}E_{1,n}$ trivially have Waring rank equal to 1; $IIE_{1,n}$ and $IE_{1,n}E_{1,n}$ agree with the Coppersmith-Winograd tensor $T_{1,CW}$, hence have Waring rank 3 and border Waring rank 2.
\end{remark}
\bibliography{mybib}{}
\bibliographystyle{plain}
\end{document}